
\documentclass[a4paper,oneside,12pt]{article}

\usepackage{amsmath,amsthm,amsfonts,amscd,amssymb}
\usepackage{longtable,geometry}
\usepackage[english]{babel}
\usepackage[utf8]{inputenc}
\usepackage[active]{srcltx}
\usepackage[T1]{fontenc}
\usepackage{graphicx}
\usepackage{pstricks}
\usepackage{bbm}
\usepackage{mathtools}
\usepackage{MnSymbol}
\usepackage{stmaryrd}
\usepackage{nicefrac}
\usepackage{calrsfs}
\usepackage{rotating}
\usepackage{xcolor}
\usepackage{framed}
\usepackage{hyperref}
\hypersetup{
    colorlinks=false,
    pdfborder={0 0 0},
}
\usepackage{enumitem}
\usepackage{footnote}

\colorlet{shadecolor}{blue!15}


\geometry{dvips,a4paper,margin=1in}

\newtheorem{thm}{Theorem}[section]

\newtheorem{cor}[thm]{Corollary}
\newtheorem{lem}[thm]{Lemma}
\newtheorem{prop}[thm]{Proposition}

\newtheorem{question}[thm]{Question}

\newcommand{\be}[1]{\begin{equation}\label{#1}}
\newcommand{\ee}{\end{equation}}
\numberwithin{equation}{section}

\newcommand{\ba}[1]{\begin{align}\label{#1}}
\newcommand{\ea}{\end{align}}
\numberwithin{equation}{section}

\newcommand{\ben}{\begin{equation*}}
\newcommand{\een}{\end{equation*}}
\numberwithin{equation}{section}

\renewenvironment{proof}[1][\relax]
  {\paragraph{Proof\ifx#1\relax\else~of #1\fi}}%
  {~\hfill$\square$\par\bigskip}
  
\usepackage{psfrag} 
\def\mik{1}
\newcommand\cpsfrag[2]{\ifnum\mik=1\psfrag{#1}{#2}\fi}


\newcommand{\calC}{\mathcal{C}}
\newcommand{\calD}{\mathcal{D}}



\newcommand{\bbH}{\mathbb{H}}

\newcommand{\bbR}{\mathbb{R}}

\newcommand{\bbT}{\mathbb{T}}


\newcommand{\sfD}{{\sf D}}

\newcommand{\bfx}{\boldsymbol x}
\newcommand{\bfp}{\boldsymbol p}


\newcommand{\eps}{\varepsilon}


\newcommand{\rk}[1]{\bgroup\color{red}%
  \par\medskip\hrule\smallskip%
  \noindent\textbf{#1}%
  \par\smallskip\hrule\medskip\egroup}

\newcommand{\xlra}{\xleftrightarrow}

\newcommand{\ind}{\mathbf{1}}

\renewcommand{\int}{\mathrm{in}}


\usepackage{enumitem}
\setlist[itemize]{itemsep=1pt, topsep=4pt}
\setlist[enumerate]{itemsep=1pt, topsep=4pt}

\title{Exponential decay in the loop~$O(n)$ model on the hexagonal lattice for~$n> 1$ and~$x<\tfrac{1}{\sqrt{3}}+\varepsilon(n)$}
\date{\today}
\author{Alexander Glazman\thanks{School of Mathematical Sciences,
Tel Aviv University,
Tel Aviv 69978,
Israel. \url{glazman@tauex.tau.ac.il}}
~and Ioan Manolescu\thanks{D\'epartement de Math\'ematiques, 
Universit\'e de Fribourg, 
23 Chemin du Mus\'ee, 
CH-1700 Fribourg, 
Switzerland.
\url{ioan.manolescu@unifr.ch}}}

\begin{document}

\maketitle

\begin{abstract}
We show that the loop~$O(n)$ model on the hexagonal lattice exhibits exponential decay of loop sizes 
whenever~$n> 1$ and~$x<\tfrac{1}{\sqrt{3}}+\varepsilon(n)$, for some suitable choice of~$\varepsilon(n)>0$. 

It is expected that, for~$n \leq 2$, the model exhibits a phase transition in terms of~$x$, that separates regimes of polynomial and exponential decay of loop sizes. In this paradigm, our result implies that the phase transition for~$n \in (1,2]$ occurs at some critical parameter~$x_c(n)$ strictly greater than that ~$x_c(1) = 1/\sqrt3$. 
The value of the latter is known since the loop~$O(1)$ model on the hexagonal lattice represents the contours of the spin-clusters of the Ising model on the triangular lattice. 

The proof is based on developing~$n$ as~$1+(n-1)$ and exploiting the fact that, when~$x<\tfrac{1}{\sqrt{3}}$, the Ising model exhibits exponential decay on any (possibly non simply-connected) domain. The latter follows from the positive association of the FK-Ising representation.
\end{abstract}

\section{Introduction}\label{sec:intro}

The loop~$O(n)$ model was introduced in~\cite{DomMukNie81} as a graphical model expected to be in the same universality class as the spin~$O(n)$ model. 
The latter is a generalisation of the seminal Ising model~\cite{Len20} that incorporates spins contained on the~$n$-dimensional sphere. See~\cite{PelSpi17} for a survey of both~$O(n)$ models. 
For integers~$n> 1$, the connection between the loop and the spin~$O(n)$ models remains purely heuristic. 
Nevertheless, the loop~$O(n)$ model became an object of study in its own right; it is predicted to have a rich phase diagram~\cite{BloNie89} in the two real parameters~$n,x>0$. For~$n=0,1,2$ the loop~$O(n)$ model is closely related to self-avoiding walk, the Ising model, and a certain random height model, respectively.

Let~$\bbH$ denote the hexagonal lattice. 
A \emph{domain} is a subgraph~$\calD = (V_\calD,E_\calD)$ of~$\bbH$ formed of the edges contained inside or along some simple cycle~$\partial\calD\subset E(\bbH)$ (hereafter called a \emph{loop}), 
and all endpoints of such edges. 
Write~$F_\calD$ for the set of faces of~$\bbH$ delimited by edges of~$\calD$ only.

Configurations $\omega \in \{0,1\}^{E_\calD}$ will be identified to the subset of edges $e \in E_\calD$ with $\omega(e) = 1$ 
(also called open edges) as well as to the spanning subgraph of $\calD$ containing these edges. 
A \emph{loop configuration} is any element~$\omega \in \{0,1\}^{E_\calD}$ that is even,
which is to say that the degree of any vertex is~$0$ or~$2$ when~$\omega$ is seen as a subgraph of~$\calD$.
As such~$\omega$ is the disjoint union of a set of loops of~$\calD$.
Loops are allowed to run along the boundary edges, but may not terminate at boundary points. 

For real parameters~$n,x >0$, let~${\sf Loop}_{\calD,n,x}$ be the measure on loop configurations given by 
\begin{align*}
	{\sf Loop}_{\calD,n,x}(\omega) = \frac1{Z_{\mathrm{loop}}(\calD,n,x)}\cdot x^{|\omega|}n^{\ell(\omega)},
\end{align*}
where~$|\omega|$ is the number of edges in~$\omega$,~$\ell(\omega)$ is the number of loops in~$\omega$ and
$Z_{\mathrm{loop}}(\calD,n,x)$ is a constant called the partition function, chosen so that ${\sf Loop}_{\calD,n,x}$ is a probability measure. 

We will consider that the origin~$0$ is a vertex of the hexagonal lattice and will always consider domains~$\calD$ containing~$0$. 
We say that the loop~$O(n)$ model with edge-weight~$x$ exhibits exponential decay of loop lengths if there exists~$c>0$ such that for any~$k\ge1$ and any domain~$\calD$, 
\begin{equation}\label{eq:exp-dec}
	{\sf Loop}_{\calD,n,x}[\mathsf R\ge k]\le\exp(-ck),
\end{equation}
where~$\mathsf R$ stands for the length of the biggest loop surrounding $0$.

According to physics predictions~\cite{Nie82,BloNie89}, the loop~$O(n)$ model exhibits macroscopic loops 
(in particular $\mathsf R$ is of the order of the radius of $\calD$) 
when~$n\in [1,2]$ and~$x\geq x_c(n) = \tfrac{1}{\sqrt{2+\sqrt{2-n}}}$. 
For all other values of~$n$ and~$x$, the model is expected to exhibit exponential decay. 
Moreover, it was conjectured (see e.g.~\cite[Section 5.6]{KagNie04}) that in the macroscopic-loops phase, the model has a conformally invariant scaling limit given by the Conformal Loop Ensemble (CLE) of parameter~$\kappa$, where:
\[
	\kappa=
	\begin{cases} 
		\frac{4\pi}{2\pi-\arccos(-n/2)}\in [\tfrac83, 4] &\text{ if }x=x_c(n),\\
		\tfrac{4\pi}{\arccos(-n/2)}\in [4, 8] &\text{ if }x>x_c(n).
	\end{cases}
\]

Our main result below is in agreement with the predicted phase diagram.

\begin{thm}\label{thm:exp_decay}
	For any~$n>1$, there exists $\eps(n) > 0$ such that the loop~$O(n)$ model exhibits exponential decay~\eqref{eq:exp-dec} 
	for all~$x < \frac{1}{\sqrt 3} + \eps(n)$.
\end{thm}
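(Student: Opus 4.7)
The plan is to exploit the binomial identity
\[
n^{\ell(\omega)} \;=\; \sum_{M\subseteq L(\omega)} (n-1)^{|M|},
\]
valid for $n>1$, which realises ${\sf Loop}_{\calD,n,x}$ as the $\omega$-marginal of a joint measure
\[
\tilde{\mathsf P}_{\calD,n,x}(\omega,M)\;\propto\; x^{|\omega|}\,(n-1)^{|M|}\,\mathbbm 1_{M\subseteq L(\omega)}
\]
on pairs consisting of a loop configuration $\omega$ and a subset $M$ of its loops (the \emph{marked} loops). The decisive feature is that, conditionally on $M$, the complement $\omega\setminus M$ is distributed as the loop $O(1,x)$ measure on the (possibly non simply-connected) domain $\calD\setminus V(M)$. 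Since loop $O(1,x)$ coincides with the Peierls-contour representation of the Ising model on the triangular dual lattice, any uniform bound on subcritical Ising correlations translates directly into a bound on the unmarked loops.

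The second input, used as a black box, is that for $x<1/\sqrt 3$ the Ising model has exponential decay of correlations uniformly over \emph{all} domains, including non simply-connected ones. This is a consequence of the positive association of the FK-Ising representation, which permits a monotone comparison of arbitrary boundary conditions to those of the whole plane. In particular, for any realisation of $M$, the loop $O(1,x)$ measure on $\calD\setminus V(M)$ satisfies an exponential bound on the length of the largest loop around $0$, at a rate uniform in $M$.

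With these ingredients we decompose the event $\{\mathsf R\ge k\}$ according to whether the largest loop around $0$ is marked or not. The unmarked case reduces, after conditioning on $M$, to the Ising input above. The marked case is the heart of the argument: for each simple loop $\gamma$ around $0$ one computes
\[
\tilde{\mathsf P}(\gamma\in M)\;=\;(n-1)\,x^{|\gamma|}\,\frac{Z_{\mathrm{loop}}(\calD\setminus V(\gamma),n,x)}{Z_{\mathrm{loop}}(\calD,n,x)}.
\]
The naive comparison $Z_{\mathrm{loop}}(\calD,n,x)\ge n\,x^{|\gamma|}\,Z_{\mathrm{loop}}(\calD\setminus V(\gamma),n,x)$, obtained by inserting $\gamma$ itself as a loop, yields only $\tilde{\mathsf P}(\gamma\in M)\le (n-1)/n$, which is insufficient because the number of simple loops of length $k$ around $0$ in $\bbH$ grows at the connective constant $\mu_c=\sqrt{2+\sqrt 2}>\sqrt 3$. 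The crucial technical step is therefore to upgrade this ratio by an additional factor $\eta^{|\gamma|}$ with $\eta<1$, extracted from the Ising exponential decay of loops that would cross $V(\gamma)$: such loops contribute to the denominator but are forbidden in the numerator, and each of them is exponentially suppressed by the FK-Ising input.

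The extension from $x<1/\sqrt 3$ to $x<1/\sqrt 3+\eps(n)$ then relies on the strict slack $n-1>0$ in the above estimates to absorb a controlled degradation of the Ising input as $x$ approaches and slightly exceeds the Ising critical value; the $(n-1)$ prefactor attached to every marked loop provides the necessary margin, with $\eps(n)\downarrow 0$ as $n\downarrow 1$. The main obstacle is the improvement of the partition-function ratio in the marked case, i.e.~quantifying the Ising cost of excising the long loop $\gamma$ in a way that is uniform both in $\gamma$ and in the surrounding (random) geometry, and making this bound survive a perturbation of $x$ past the Ising critical point.
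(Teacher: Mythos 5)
Your opening moves coincide with the paper's: the expansion $n^{\ell(\omega)}=\sum_{M}(n-1)^{|M|}$ is exactly the Chayes--Machta decomposition $n=(n-1)+1$ (your marked loops are the paper's blue loops, and the conditional law of the unmarked loops given $M$ is indeed loop $O(1)$ on $\calD\setminus M$), and your black box --- exponential decay of the subcritical Ising/loop $O(1)$ model uniformly over all, possibly non simply-connected, domains via positive association of FK-Ising --- is also the paper's. The first genuine gap is your treatment of the marked case, which in fact never needs to be treated. Under the joint measure each loop of $\omega$ is marked independently with probability $(n-1)/n$, so for any vertex $v$ the probability that the loop through $v$ is long \emph{and unmarked} equals exactly $\tfrac1n$ times the probability that it is long; a bound on unmarked loops therefore bounds all loops at the cost of a factor $n$, and the event $\{\mathsf R\ge k\}$ follows by a union bound over the vertices of a half-line from $0$ that the largest surrounding loop must cross. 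Your route instead needs $\tilde{\mathsf P}(\gamma\in M)\le (n-1)(x\eta)^{|\gamma|}$ with $x\eta$ below the reciprocal of the connective constant $\sqrt{2+\sqrt2}$; you correctly observe that the naive partition-function comparison gives nothing, but the ``additional factor $\eta^{|\gamma|}$ extracted from the Ising decay'' is asserted rather than proved, and making it uniform in $\gamma$ and in the surrounding random geometry is precisely the Peierls-type obstruction that confined the previously best bound to $x<\tfrac{1}{\sqrt{2+\sqrt2}}+\eps$. As written, this step would fail to deliver the theorem even for $x<1/\sqrt3$.

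The second gap is the extension to $x<\tfrac{1}{\sqrt3}+\eps(n)$. Your proposed mechanism --- slack from the $(n-1)$ prefactor attached to marked loops --- addresses the wrong half of the argument: once $x>1/\sqrt3$ it is the \emph{unmarked} (Ising) input that breaks down, since FK-Ising on the full domain is then supercritical and the uniform-over-domains exponential decay is simply false; no amount of margin in the marked-loop estimates repairs this. The paper's mechanism is an enhancement/dilution argument: the blue loops split $\calD\setminus\omega_b$ into the two subgraphs $\sfD_+$ and $\sfD_-$, each of which is stochastically dominated by Bernoulli percolation ${\sf Perco}_{\alpha}$ with $\alpha<1$ depending only on $n$ and $x$ (proved by Holley's inequality applied to the induced spin configuration), and the FK-Ising measure averaged over such a ${\sf Perco}_\alpha$-diluted random domain is in turn dominated by FK-Ising on the full domain at a strictly smaller parameter $\tilde x<x$ (again by Holley). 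Choosing $\eps(n)$ so that $\tilde x<1/\sqrt3$ closes the argument. Nothing in your sketch plays the role of this dilution step, and without it the claim for $x\ge 1/\sqrt3$ is unsupported.
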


Prior to our work, the best known bound on the regime of exponential decay for~$n>1$ was~$x < \tfrac{1}{\sqrt{2+\sqrt{2}}}+ \varepsilon(n)$~\cite{Tag18}, where~$\sqrt{2+\sqrt{2}}$ is the connective constant of the hexagonal lattice computed in~\cite{DumSmi12}.  
Also, in~\cite{DumPelSam14} it was shown that when~$n$ is large enough the model exhibits exponential decay for any value of~$x>0$.
Apart from the improved result, our paper provides a method of relating (some forms of) monotonicity in $x$ and $n$; 
see Section~\ref{sec:open_questions} for more details. 

\begin{figure}
    \begin{center}
    \includegraphics[scale = 1.1]{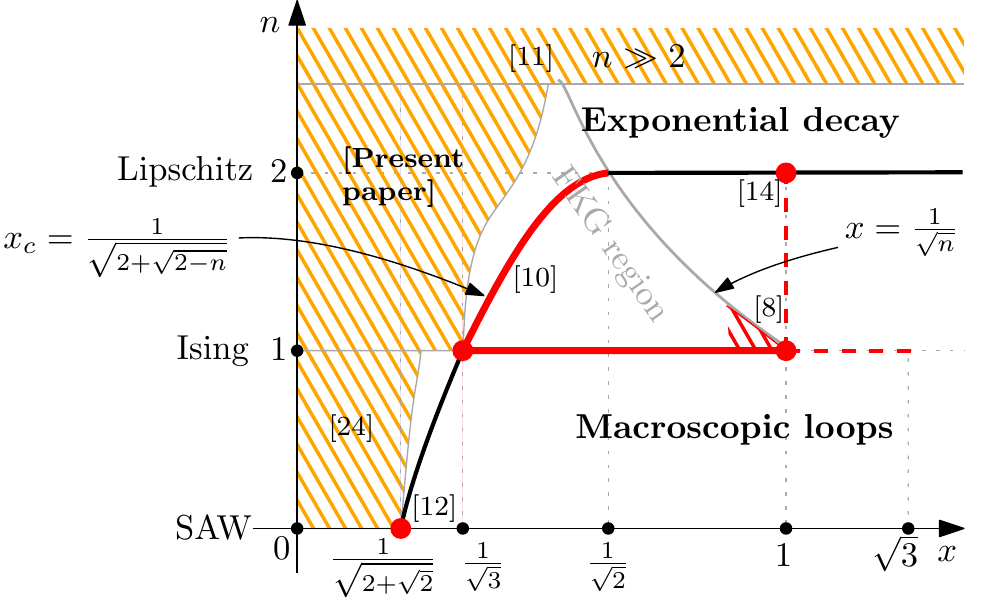}
    \cpsfrag{a}{\cite{Tag18}}
    \caption{The phase diagram of the loop $O(n)$ model. 
    It is expected that above and to the left of the curve $x_c(n)= \frac{1}{\sqrt{2+\sqrt{2-n}}}$ (in black) 
    the model exhibits exponential decay of loop lengths; 
    below and on the curve, it is expected to have macroscopic loops and converge to $\mathrm{CLE}(\kappa)$ in the scaling limit. 
    The convergence was established only at~$n = 1,x=\tfrac{1}{\sqrt{3}}$ (critical Ising model~\cite{Smi10,CheSmi12,BenHon16}) 
    and~$n=x=1$ (site percolation on~$\bbT$ at~$p_c=\tfrac{1}{2}$~\cite{Smi01, CamNew06}).
    Regions where the behaviour was confirmed by recent results are marked in orange (for exponential decay) and red (for macroscopic loops).
    The relevant references are also marked. }
    \end{center}
    \label{fig:phase_diagram}
\end{figure}

Existence of macroscopic loops was shown for~$n\in [1,2]$ and~$x=x_c(n) = \tfrac{1}{\sqrt{2+\sqrt{2-n}}}$ in~\cite{DumGlaPel17}, for~$n=2$ and 
$x=1$ in~\cite{GlaMan18}, and for~$n\in [1,1+\varepsilon]$ and~$x\in [1-\varepsilon, \tfrac{1}{\sqrt{n}}]$ in~\cite{CraGlaHarPel18}.
Additionally, for~$n=1$ and~$x\in [1,\sqrt{3}]$ (which corresponds to the antiferromagnetic Ising model) as well as for~$n\in[1,2]$ and~$x=1$, a partial result in the same direction was shown in~\cite{CraGlaHarPel18}.
Indeed, it was proved that in this range of parameters, at least one loop of length comparable to the size of the domain exists with positive probability (thus excluding the exponential decay).
All results appear on the phase diagram of Figure~\ref{fig:phase_diagram}.

We finish the introduction by providing a sketch of our proof. There are three main steps in it. Fix~$n>1$. 
First, inspired by Chayes--Machta~\cite{ChaMac98}, we develop the partition function in~$n = (n-1) + 1$, 
so that it takes the form of the loop~$O(1)$ model sampled on the vacant space of a weighted loop~$O(n-1)$ model. 
Second, we use that the loop~$O(1)$ model is the representation of the Ising model on the faces of~$\bbH$; the latter exhibiting exponential decay of correlations for all~$x<1/\sqrt{3}$. Via the FK-Ising representation, this statement may be extended when the Ising model is sampled in the random domain given by a loop~$O(n-1)$ configuration. 
At this stage we will have shown that the loop~$O(n)$ model exhibits exponential decay when~$x < 1/\sqrt 3$.
Finally, using enhancement techniques, we show that the presence of the loop~$O(n-1)$ configuration strictly increases the critical parameter of the Ising model, thus allowing to extend our result to all~$x<1/\sqrt{3} + \varepsilon(n)$.
\bigskip

\noindent {\bf Acknowledgements:}
The authors would like to thank Ron Peled for suggesting to develop in~$n = (n-1) + 1$ following Chayes and Machta. Our discussions with Hugo Duminil-Copin, Yinon Spinka and Marcelo Hilario were also very helpful.
We acknowledge the hospitality of IMPA (Rio de Janeiro), where this project started. 

The first author is supported by the Swiss NSF grant P300P2\_177848, and partially supported by the European Research Council starting grant 678520 (LocalOrder).
The second author is a member of the NCCR SwissMAP.

\section{The Ising connection}
	
In this section we formalise a well-known connection between the Ising model (and its FK-representation) and the loop~$O(1)$ model (see for instance \cite[Sec.~3.10.1]{FriVel17}).
It will be useful to work with inhomogeneous measures in both models. 

Fix a domain~$\calD = (V,E)$; we will omit it from notation when not necessary. 
Let~$\bfx = (x_e)_{E} \in [0,1]^{E}$ be a family of parameters. 
The loop~$O(1)$ measure with parameters~$\bfx$ is given by 
\begin{align*}
	{\sf Loop}_{\calD,1,\bfx}(\omega) =
	{\sf Loop}_{\bfx}(\omega) = \frac{1}{Z_{\mathrm{loop}}(\calD,1,\bfx)}\Big( \prod_{e \in \omega} x_e\Big) \cdot \ind_{\{\omega \text{ loop config.}\}}
	\qquad \text{ for all~$\omega \in \{0,1\}^{E}$}.
\end{align*}
The percolation measure~${\sf Perco}_{\bfx}$ of parameters~$\bfx$ consists of choosing the state of every edge independently, open with probability~$x_e$ for each edge~$e \in E$: 
\begin{align*}
	{\sf Perco}_{\bfx}(\omega) 
	= \Big(\prod_{e \in \omega} x_e\Big)\,\Big( \prod_{e \in E\setminus \omega}(1- x_e)\Big),
	\qquad \text{ for all~$\omega \in \{0,1\}^{E}$}.
\end{align*}
Finally, associate to the parameters~$\bfx$ the parameters~$\bfp = (p_e)_{E} \in [0,1]^{E}$ defined by 
\begin{align*}
	p_e = p(x_e) = \frac{2x_{e}}{1+x_e},\qquad \text{ for all~$e \in {E}$}.
\end{align*}
Define the FK-Ising measure on~$\calD$ by
\begin{align*}
	\Phi_{\bfx} (\omega) 
	= \frac{1}{Z_{FK}(\bfx)}\,\Big( \prod_{e \in \omega} p_e\Big)\,\Big( \prod_{e \in E\setminus \omega}(1- p_e)\Big)\, 2^{k(\omega)},
	\qquad \text{ for all~$\omega \in \{0,1\}^{E}$}.
\end{align*}
where~$k(\omega)$ is the number of connected components of~$\omega$ and~$Z_{FK}(\bfx)$ is a constant chosen so that~$\Phi_{\bfx}$ is a probability measure.

When~$\bfx$ is constant equal to some~$x \in [0,1]$, write~$x$ instead of~$\bfx$.
For~$D \subset E$, write~$\Phi_{D, x}$ and~${\sf Loop}_{D,1,x}$
for the FK-Ising and loop~$O(1)$ measures, respectively, on~$\calD$ with inhomogeneous weights~$(x \ind_{\{e \in D\}})_{e\in E}$
(where $\ind$ stands for the indicator function).
These are simply the measures~$\Phi_{\calD, x}$ and~${\sf Loop}_{\calD,1,x}$ conditioned on~$\omega\cap D^c = \emptyset$. 

\begin{prop}\label{prop:sub_FK}
	Fix~$\bfx = (x_e)_{E} \in [0,1]^{E}$ and let  
	$\omega, \pi \in \{0,1\}^{E}$ be two independent configurations chosen according to~${\sf Loop}_{\bfx}$ and~${\sf Perco}_{\bfx}$, respectively. 
	Then the configuration~$\omega \vee \pi$ defined by~$(\omega \vee \pi)(e) = \max\{\omega(e),\pi(e)\}$ has law~$\Phi_{\bfx}$.
	In particular 
	\begin{align}\label{eq:sub_FK}
		{\sf Loop}_{\bfx} \leq_{\text{st}}\Phi_{\bfx}.
	\end{align}
\end{prop}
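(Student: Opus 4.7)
The plan is to identify the law of $\omega\vee\pi$ by a direct computation: fix an arbitrary target $\eta\in\{0,1\}^{E}$, compute $\bbP[\omega\vee\pi=\eta]$ by summing over compatible pairs $(\omega,\pi)$, then rewrite $\Phi_{\bfx}(\eta)$ in terms of the $x_e$ via the relations $p_e=2x_e/(1+x_e)$ and $1-p_e=(1-x_e)/(1+x_e)$, and verify that the two expressions agree up to a constant independent of $\eta$. Since both are probability measures on the finite set $\{0,1\}^{E}$, any such constant of proportionality must equal $1$, yielding the claimed identity in law. The stochastic domination \eqref{eq:sub_FK} will then follow for free from the pointwise bound $\omega\le\omega\vee\pi$.

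For the first step, I would note that $\omega\vee\pi=\eta$ forces $\omega\subseteq\eta$ (with $\omega$ even), $\pi(e)=0$ for $e\notin\eta$, and $\pi(e)=1$ for $e\in\eta\setminus\omega$, while $\pi(e)$ is \emph{free} for $e\in\omega$. On those free edges the ${\sf Perco}_{\bfx}$-weights sum to $(1-x_e)+x_e=1$, so after collecting the edge factors the dependence on the particular $\omega$ disappears and one obtains
\begin{align*}
\bbP[\omega\vee\pi=\eta] \;=\; \frac{N(\eta)}{Z_{\mathrm{loop}}(\bfx)}\prod_{e\in\eta}x_e\prod_{e\notin\eta}(1-x_e),
\end{align*}
where $N(\eta)$ denotes the number of even spanning subgraphs of $(V,\eta)$ inside $\calD$.

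The key ingredient is then the classical $\bbF_2$-cycle-space identity $N(\eta)=2^{|\eta|-|V|+k(\eta)}$, where $k(\eta)$ counts the connected components of $(V,\eta)$ (isolated vertices included). Substituting $p_e$ and $1-p_e$ into the definition of $\Phi_{\bfx}(\eta)$ produces the matching combination $2^{|\eta|}\cdot 2^{k(\eta)}$ together with the very same product of weights $\prod_{e\in\eta}x_e\prod_{e\notin\eta}(1-x_e)$, up to an $\eta$-independent prefactor coming from $\prod_{e\in E}(1+x_e)^{-1}$ and the normalisations $Z_{\mathrm{loop}}(\bfx), Z_{FK}(\bfx)$. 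Comparing the two expressions shows that the ratio $\bbP[\omega\vee\pi=\eta]/\Phi_{\bfx}(\eta)$ is independent of $\eta$, which closes the identification.

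I do not anticipate a genuine obstacle: the only nontrivial input is the cycle-space dimension formula for $N(\eta)$, which is a standard fact of algebraic graph theory, and the remaining work is routine bookkeeping of per-edge weights. The stochastic domination~\eqref{eq:sub_FK} is then immediate by construction, since $\omega(e)\le(\omega\vee\pi)(e)$ holds deterministically for every edge, so the coupling witnesses the domination.
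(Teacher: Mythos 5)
Your proof is correct and follows essentially the same route as the paper: a direct computation of the law of $\omega\vee\pi$, reduction to counting even subgraphs of $\eta$, and identification with $\Phi_{\bfx}$ up to an $\eta$-independent constant, with the domination \eqref{eq:sub_FK} read off from the coupling. The only (cosmetic) difference is that you justify $N(\eta)=2^{|\eta|-|V|+k(\eta)}$ via the dimension of the $\bbF_2$-cycle space, whereas the paper uses the bijection with face-spin assignments together with Euler's formula; the two arguments are equivalent.
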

	
We give a short proof below. The reader familiar with the Ising model may consult the diagram of Figure~\ref{fig:coupling} for a more intricate but more natural proof. 

\begin{figure}
    \begin{center}
    \includegraphics[width=1\textwidth]{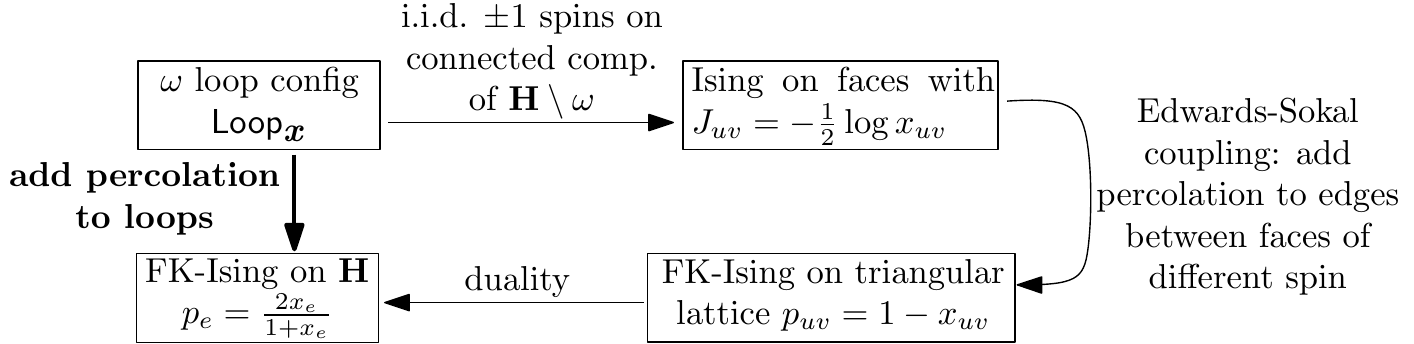}
    \caption{The coupling of Proposition~\ref{prop:sub_FK} via the spin-Ising representation.}
    \label{fig:coupling}
    \end{center}
\end{figure}

\begin{proof}
	Write~${\sf Loop}_{\bfx} \otimes {\sf Perco}_{\bfx}$ for the measure sampling~$\omega$ and~$\pi$ independently. 
	Fix~$\eta \in \{0,1\}^{E}$ and let us calculate 
	\begin{align}\label{eq:coupling1}\nonumber
		{\sf Loop}_{\bfx} \otimes {\sf Perco}_{\bfx}(\omega \vee \pi = \eta)
		& = \sum_{\substack{\omega \subset \eta \\\omega \text{ loop config}}} 
			{\sf Loop}_{\calD,\bfx}(\omega)\cdot  {\sf Perco}_{\calD\setminus \omega, \bfx}(\eta \setminus \omega)\\
		& = \sum_{\substack{\omega \subset \eta \\\omega \text{ loop config}}} 
		\frac{1}{Z_{\mathrm{loop}}(\calD,1,\bfx)}
		\,\Big(\prod_{e \in \omega} x_e\Big)\,\Big(  \prod_{e \in\eta \setminus \omega } x_e\Big)\,\Big( \prod_{e \in E \setminus \eta } (1 - x_e) \Big) \nonumber\\
		& =\frac{1}{Z_{\mathrm{loop}}(\calD,1,\bfx)}\,\Big( \prod_{e \in\eta} x_e\Big)\,\Big( \prod_{e \in E \setminus \eta } (1 - x_e) \Big)
		\sum_{\substack{\omega \subset \eta \\\omega \text{ loop config}}} 1.
	\end{align}
	
	Next we estimate the number of loop configurations~$\omega$ contained in~$\eta$. 
	Consider~$\eta$ as a graph embedded in the plane 
	and let~$F(\eta)$ be the set of connected components of~$\bbR^2 \setminus \eta$; these are the faces of~$\eta$. 
	The set of loop configurations~$\omega$ contained in~$\eta$ is in bijection with the set of assignments of spins~$\pm 1$ 
	to the faces of~$\eta$, with the only constraint that the infinite face has spin~$+1$. 
	Indeed, given a loop configuration~$\omega \subset \eta$, assign spin~$-1$ to the faces of~$\eta$ surrounded by an odd number of loops of~$\omega$, and~$+1$ to all others. 
	The inverse map is obtained by considering the edges separating faces of distinct spin. 
	
	The Euler formula applied to the graph~$\eta$ reads~$|V| - |\eta| + |F(\eta)| = 1 + k(\eta)$.
	Hence, the number of loop configurations contained in~$\eta$ is 
	\begin{align*}
	\sum_{\substack{\omega \subset \eta \\\omega \text{ loop config}}} 1 = 2^{F(\eta) -1}  =  2^{k(\eta) + |\eta| - |V|}.
	\end{align*}
	Inserting this in \eqref{eq:coupling1}, we find
	\begin{align*}
		{\sf Loop}_{\bfx} \otimes {\sf Perco}_{\bfx}\big(\omega \vee \pi = \eta\big)
		& =\frac{2^{-|V|}}{Z_{\mathrm{loop}}(\calD,1,\bfx)}\,\Big( \prod_{e \in\eta} 2 x_e\Big)\,\Big( \prod_{e \in E \setminus \eta} (1 - x_e)\Big)\,  2^{k(\eta)}\\
		& =\frac{2^{-|V|}(1+x_e)^{|E|}}{Z_{\mathrm{loop}}(\calD,1,\bfx)} 
		\,\Big(\prod_{e \in\eta} \tfrac{2x_e}{1+x_e}\Big)\,\Big( \prod_{e \in E \setminus \eta} (1 - \tfrac{2x_e}{1+x_e})\Big) \,2^{k(\eta)}.
	\end{align*}
	Since~${\sf Loop}_{\bfx} \otimes {\sf Perco}_{\bfx}$ is a probability measure, we deduce that it is equal to~$\Phi_{\bfx}$ and that the normalising constants are equal, namely
	\begin{align}\label{eq:Z}
		\frac{Z_{\mathrm{loop}}(\calD,1,\bfx)}{2^{-|V|}(1+x_e)^{|E|}} = {Z_{\text{FK}}(\calD,\bfx)}.
	\end{align}
\end{proof}

While the loop model has no apparent monotonicity, the FK-Ising model does. 
This will be of particular importance. 

\begin{prop}[Thm.~3.21 \cite{Gri06}]\label{prop:monotonicity}
	Let~$\bfx = (x_e)_{e \in E} \in [0,1]^E$ and~$\tilde\bfx = (\tilde x_e)_{e \in E} \in [0,1]^E$ be two sets of parameters 
	with~$x_e \leq \tilde x_e$ for all~$e \in E$. Then~$\Phi_{\bfx}  \leq_{\text{st}} \Phi_{\tilde\bfx}$ in the sense that
	\begin{align*}
		\Phi_{\bfx}(A)  \leq \Phi_{\tilde\bfx}(A) \qquad \text{ for any increasing event~$A$.}
	\end{align*}
\end{prop}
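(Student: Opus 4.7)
The plan is to invoke Holley's inequality: for two strictly positive probability measures $\mu_1,\mu_2$ on $\{0,1\}^E$, if for every edge $e$ and every pair $\omega\le\omega'$ of configurations
\[
\mu_1\bigl(\omega(e)=1\,\big|\,\omega|_{E\setminus\{e\}}\bigr)\;\le\;\mu_2\bigl(\omega'(e)=1\,\big|\,\omega'|_{E\setminus\{e\}}\bigr),
\]
then $\mu_1\le_{\mathrm{st}}\mu_2$. Set $\mu_1=\Phi_{\bfx}$ and $\mu_2=\Phi_{\tilde\bfx}$. An obvious limiting/restriction argument reduces to the case $x_e,\tilde x_e\in(0,1)$ (edges with deterministic states can be removed or forced open), so it suffices to verify the single-edge condition above.

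A direct calculation from the definition of $\Phi_{\bfx}$ yields, for any edge $e=uv$,
\[
\Phi_{\bfx}\bigl(\omega(e)=1\,\big|\,\omega|_{E\setminus\{e\}}\bigr)
=\begin{cases}
p_e & \text{if }u\leftrightarrow v\text{ in }\omega|_{E\setminus\{e\}},\\[4pt]
\dfrac{p_e}{2-p_e} & \text{otherwise,}
\end{cases}
\]
and analogously for $\Phi_{\tilde\bfx}$. The factor $2-p_e=p_e+2(1-p_e)$ in the disconnected case reflects that opening such an edge merges two clusters and divides the weight $2^{k(\omega)}$ by $2$.

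The argument is then concluded by three elementary observations: (a) $x\mapsto p(x)=\tfrac{2x}{1+x}$ is strictly increasing, so $p_e\le\tilde p_e$ for all $e$; (b) $p\mapsto p/(2-p)$ is increasing on $[0,1]$ and satisfies $p/(2-p)\le p$; (c) if $\omega\le\omega'$, then every path in $\omega|_{E\setminus\{e\}}$ is a path in $\omega'|_{E\setminus\{e\}}$, so $u\leftrightarrow v$ off $e$ under $\omega$ forces the same under $\omega'$. A case analysis on the two connection events eliminates one of the four a priori cases and reduces the remaining three to $p_e\le\tilde p_e$, to $p_e/(2-p_e)\le\tilde p_e/(2-\tilde p_e)$, or to $p_e/(2-p_e)\le p_e\le\tilde p_e$, respectively.

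There is no serious obstacle here: the one structural ingredient is the supermodularity of $\omega\mapsto k(\omega)$, packaged into the $q=2$ cluster weight, which simultaneously produces FKG for a single FK-Ising measure and the monotone comparison across parameters. We emphasise that this is precisely the property unavailable for the pure loop measure ${\sf Loop}_{\bfx}$, which is why the detour through the FK-Ising model of Proposition~\ref{prop:sub_FK} is needed in the first place.
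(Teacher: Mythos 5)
Your proof is correct and is precisely the standard Holley-inequality argument behind \cite[Thm.~3.21]{Gri06}, which the paper does not reproduce but simply cites, remarking that the adaptation to inhomogeneous weights is straightforward; your single-edge conditional probabilities $p_e$ and $p_e/(2-p_e)$ and the three-case comparison are exactly that adaptation. In short, you have supplied the proof the paper outsources to the reference, and done so correctly.
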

The version above is slightly different from \cite[Thm~3.21]{Gri06}, as it deals with inhomogeneous measures;
adapting the proof is straightforward. 

Finally, it is well known that the FK-Ising model on the hexagonal lattice exhibits a sharp phase transition at~$p_c = \frac2{\sqrt3 + 1}$~--- the critical point for the Ising model was computed by Onsager~\cite{Ons44} 
(see~\cite{BefDum12} for the explicit formula on the triangular lattice), 
the sharpness of the phase transition was shown in~\cite{AizBarFer87}.
For~$p = p(x)$ strictly below $p_c$, which is to say~$x < \frac{1}{\sqrt 3}$, the model exhibits exponential decay of cluster volumes. 
Indeed, this may be easily deduced using~\cite[Thm.~5.86]{Gri06}.

\begin{thm}\label{thm:FK_subcrit}
	For~$x < \frac{1}{\sqrt 3}$ there exist~$c= c(x) > 0$ and~$C>0$ such that, for any domain~$\calD$,
	\begin{align*}
		\Phi_{\calD,x}(|\calC_0|\geq k) \leq C\, e^{-c\, k},
	\end{align*}
	where~$\calC_0$ denotes the cluster containing~$0$ and $|\calC_0|$ its number of vertices.
\end{thm}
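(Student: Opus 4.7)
The plan is to reduce the statement for an arbitrary finite domain $\calD$ to the analogous statement for the infinite-volume FK-Ising measure on $\bbH$ with free boundary conditions, and then invoke the sharpness of the phase transition.

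First, for any finite region $\calD' \supset \calD$, view $\Phi_{\calD, x}$ as the inhomogeneous measure $\Phi_\bfx$ on $\calD'$ with $x_e = x$ for $e \in E_\calD$ and $x_e = 0$ for $e \in E_{\calD'} \setminus E_\calD$; edges of weight $0$ are almost-surely closed, so this really is $\Phi_{\calD,x}$ pushed forward to $\calD'$, and the cluster of $0$ is unchanged. Applying Proposition~\ref{prop:monotonicity} with the uniform weights $\tilde x_e = x$ on $\calD'$, and noting that $\{|\calC_0| \geq k\}$ is an increasing event, gives
\[
\Phi_{\calD, x}(|\calC_0| \geq k) \leq \Phi_{\calD', x}(|\calC_0| \geq k).
\]
Letting $\calD' \nearrow \bbH$ along an exhaustion and using the standard weak convergence of finite-volume free FK-Ising measures to the infinite-volume free measure $\Phi^0_{\bbH, x}$ (valid for $q \geq 1$; see \cite[Ch.~4]{Gri06}) yields the uniform bound
\[
\Phi_{\calD, x}(|\calC_0| \geq k) \leq \Phi^0_{\bbH, x}(|\calC_0| \geq k).
\]

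Second, the change of variables $p = 2x/(1+x)$ sends $x = 1/\sqrt 3$ to $p = 2/(1+\sqrt 3) = p_c$, Onsager's critical value for the FK-Ising model on $\bbH$. Hence $x < 1/\sqrt 3$ corresponds to the strictly subcritical regime. Then \cite[Thm.~5.86]{Gri06} (a consequence of the Aizenman--Barsky--Fern\'andez sharpness argument \cite{AizBarFer87}) supplies constants $c = c(x) > 0$ and $C > 0$ such that
\[
\Phi^0_{\bbH, x}(|\calC_0| \geq k) \leq C e^{-c k}.
\]
Combining the two displayed inequalities gives the theorem.

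The only point requiring care is that \cite[Thm.~5.86]{Gri06} provides a bound on the full cluster volume (not just a two-point function) for the free-boundary infinite-volume FK-Ising measure on $\bbH$; this is a well-established consequence of sharpness in the $q=2$ case and needs no new input. The rest of the argument is monotonicity in the edge weights, so no obstacle of substance arises.
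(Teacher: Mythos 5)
Your argument is correct and takes essentially the same route as the paper, which proves this theorem only by citing the Onsager/Aizenman--Barsky--Fern\'andez identification of $p_c=\tfrac{2}{\sqrt3+1}$ together with \cite[Thm.~5.86]{Gri06}, leaving the reduction to the full lattice implicit. Your explicit step of viewing $\Phi_{\calD,x}$ as the inhomogeneous measure with weights $x\,\ind_{\{e\in E_\calD\}}$ and invoking Proposition~\ref{prop:monotonicity} to get uniformity over all (possibly non-simply-connected) domains is exactly the intended use of that proposition, so nothing further is needed.
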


\section{n = (n-1) + 1}

Fix a domain~$\calD = (V,E)$ and a value~$n > 1$. 
Choose~$\omega$ according to~${\sf Loop}_{\calD,n,x}$. 
Colour each loop of~$\omega$ in blue with probability~$1- \frac1n$ and red with probability~$\frac1n$.
Let~$\omega_b$ and~$\omega_r$ be the configurations formed only of the blue and red loops, respectively;
extend~${\sf Loop}_{\calD,n,x}$ to incorporate this additional randomness. 

\begin{prop}\label{prop:marginals}
	For any two non-intersecting loop configurations $\omega_b$ and $\omega_r$, we have
	\begin{align*}
		{\sf Loop}_{\calD,n,x} (\omega_r \,|\, \omega_b) & = {\sf Loop}_{\calD\setminus \omega_b,1,x}(\omega_r)		 \qquad\qquad\qquad \text{and}\medskip\\
		{\sf Loop}_{\calD,n,x} (\omega_b) & =\frac{{Z_{\mathrm{loop}}(\calD\setminus \omega_b,1,x)}}{Z_{\mathrm{loop}}(\calD,n,x)}(n-1)^{\ell(\omega_b)} x^{|\omega_b|}.
	\end{align*}
\end{prop}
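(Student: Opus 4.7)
The plan is a direct computation: write down the joint distribution of the coloured configuration $(\omega_b, \omega_r)$, observe that the coloring factorises through $n = (n-1) + 1$, and then read off the conditional and the marginal.

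\medskip

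Starting from the definition, for any loop configuration $\omega$ with a coloring that splits it into a blue part $\omega_b$ and a red part $\omega_r$ (so $\omega = \omega_b \sqcup \omega_r$ as edge sets, with $\ell(\omega) = \ell(\omega_b) + \ell(\omega_r)$ and $|\omega| = |\omega_b| + |\omega_r|$), the extended measure assigns weight
\begin{align*}
{\sf Loop}_{\calD,n,x}(\omega_b,\omega_r)
= \frac{1}{Z_{\mathrm{loop}}(\calD,n,x)}\, x^{|\omega_b|+|\omega_r|}\, n^{\ell(\omega_b)+\ell(\omega_r)}\, \bigl(1-\tfrac1n\bigr)^{\ell(\omega_b)}\bigl(\tfrac1n\bigr)^{\ell(\omega_r)}.
\end{align*}
The key algebraic identity is $n^{\ell(\omega_b)}(1-\tfrac1n)^{\ell(\omega_b)} = (n-1)^{\ell(\omega_b)}$ and $n^{\ell(\omega_r)}(\tfrac1n)^{\ell(\omega_r)} = 1$, so the expression simplifies to
\begin{align*}
{\sf Loop}_{\calD,n,x}(\omega_b,\omega_r)
= \frac{1}{Z_{\mathrm{loop}}(\calD,n,x)}\, (n-1)^{\ell(\omega_b)}\, x^{|\omega_b|}\, x^{|\omega_r|}.
\end{align*}

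\medskip

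To obtain the marginal of $\omega_b$, I sum over all loop configurations $\omega_r$ in $\calD$ disjoint from $\omega_b$, which is exactly the set of loop configurations on $\calD\setminus\omega_b$ (the configuration $\omega_b$ being itself a loop configuration, its removal preserves the degree parity, so $\omega_r$ ranges over genuine loop configurations of the complementary subgraph). The sum $\sum_{\omega_r} x^{|\omega_r|}$ is by definition $Z_{\mathrm{loop}}(\calD\setminus\omega_b,1,x)$, which yields the formula for ${\sf Loop}_{\calD,n,x}(\omega_b)$. Dividing the joint by the marginal, the $(n-1)^{\ell(\omega_b)} x^{|\omega_b|}$ factors cancel, leaving
\begin{align*}
{\sf Loop}_{\calD,n,x}(\omega_r\,|\,\omega_b) = \frac{x^{|\omega_r|}}{Z_{\mathrm{loop}}(\calD\setminus\omega_b,1,x)},
\end{align*}
which is precisely ${\sf Loop}_{\calD\setminus\omega_b,1,x}(\omega_r)$.

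\medskip

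There is no real obstacle here: the statement is essentially a combinatorial identity, the only subtle point being the bookkeeping that ensures $\omega_r$ ranges exactly over loop configurations supported on $E_\calD \setminus \omega_b$ once $\omega_b$ is fixed, so that the residual sum matches the definition of $Z_{\mathrm{loop}}(\calD\setminus\omega_b,1,x)$. This is automatic from the assumption that $\omega_b$ and $\omega_r$ are non-intersecting and that their union is even.
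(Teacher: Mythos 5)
Your proof is correct and follows essentially the same route as the paper's: both write down the joint weight of $(\omega_b,\omega_r)$, use the identity $n\cdot\tfrac{n-1}{n}=n-1$ and $n\cdot\tfrac1n=1$ to absorb the colouring probabilities, and then recognise the sum over $\omega_r$ as $Z_{\mathrm{loop}}(\calD\setminus\omega_b,1,x)$. The only cosmetic difference is that you sum explicitly to get the marginal and then divide, whereas the paper factors the joint weight directly into the product of the two claimed expressions; these are the same computation.
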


\begin{proof}
	For two non-intersecting loop configurations~$\omega_b$ and~$\omega_r$, if we write~$\omega = \omega_b\vee\omega_r$, we have
	\begin{align*}
    	{\sf Loop}_{\calD,n,x}(\omega_b,\omega_r)
    	&= \big(\tfrac{n-1}{n} \big)^{\ell(\omega_b)}\big(\tfrac{1}n \big)^{\ell(\omega_r)}{\sf Loop}_{\calD,n,x}(\omega)\\
		&= \frac{1}{Z_{\mathrm{loop}}(\calD,n,x)} (n-1)^{\ell(\omega_b)} x^{|\omega_b| + |\omega_r|}\\
		&= \frac{{Z_{\mathrm{loop}}(\calD\setminus \omega_b,1,x)}}{Z_{\mathrm{loop}}(\calD,n,x)}(n-1)^{\ell(\omega_b)} x^{|\omega_b|} \cdot
		\frac{x^{|\omega_r|}}{Z_{\mathrm{loop}}(\calD\setminus \omega_b,1,x)}.
	\end{align*}
	Notice that~$\omega_r$ only appears in the last fraction. 
	Moreover, if we sum this fraction over all loop configurations~$\omega_r$ not intersecting~$\omega_b$, we obtain~$1$. 
	This proves both assertions of the proposition. 
\end{proof}

Recall that for a percolation configuration, $\calC_0$ denotes the connected component containing~$0$. 
If $\omega$ is a loop configuration, then~$\calC_0(\omega)$ is simply the loop in~$\omega$ that passes through~$0$ 
(with~$\calC_0(\omega):=\{0\}$ if no such loop exists).

\begin{cor}\label{cor:thm-easy-part}
	Let~$n\geq 1$ and~$x<1/\sqrt{3}$. Then~${\sf Loop}_{\calD,n,x}$ exhibits exponential decay.
\end{cor}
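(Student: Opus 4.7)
The plan is to reduce the loop~$O(n)$ problem to a loop~$O(1)$ problem on a random subdomain via the colouring decomposition of Proposition~\ref{prop:marginals}, then to combine the FK-Ising domination (Proposition~\ref{prop:sub_FK}), the FK-Ising monotonicity in parameters (Proposition~\ref{prop:monotonicity}), and the subcritical estimate (Theorem~\ref{thm:FK_subcrit}).

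Sample $\omega\sim{\sf Loop}_{\calD,n,x}$ together with an independent colouring that paints each loop red with probability $1/n$ and blue otherwise, and write $\omega=\omega_b\vee\omega_r$. If $\omega$ contains at least one loop of length $\geq k$ around $0$, then with conditional probability at least $1-(1-1/n)=1/n$ at least one such loop is red; hence
\begin{align*}
{\sf Loop}_{\calD,n,x}[\mathsf R\geq k]\leq n\cdot{\sf Loop}_{\calD,n,x}\big[\omega_r\text{ contains a loop of length}\geq k\text{ around }0\big].
\end{align*}
Conditionally on $\omega_b$, Proposition~\ref{prop:marginals} gives $\omega_r\sim{\sf Loop}_{\calD\setminus\omega_b,1,x}$, and Proposition~\ref{prop:sub_FK} furnishes a coupling $(\omega_r,\pi)$ with $\omega_r\vee\pi\sim\Phi_{\calD\setminus\omega_b,x}$ in which every loop of $\omega_r$ surrounding $0$ survives as a cycle of the same length surrounding $0$. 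The event ``there exists a cycle of length $\geq k$ around $0$'' is increasing, so Proposition~\ref{prop:monotonicity} (extending the weights on the edges of $\omega_b$ from $0$ up to $x$) yields $\Phi_{\calD\setminus\omega_b,x}\leq_{\text{st}}\Phi_{\calD,x}$. Taking expectations,
\begin{align*}
{\sf Loop}_{\calD,n,x}[\mathsf R\geq k]\leq n\cdot\Phi_{\calD,x}\big[\exists\text{ cycle around }0\text{ of length}\geq k\big].
\end{align*}

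It remains to bound the right-hand side exponentially in $k$, uniformly in $\calD$. Fix a nearest-neighbour path $0=v_0,v_1,v_2,\ldots$ in $\bbH$ from $0$ to the boundary of $\calD$. Any cycle around $0$ must cross this path at some $v_j$, and then the FK-Ising cluster of $v_j$ contains the entire cycle, so $|\calC_{v_j}|$ is at least the cycle length. Moreover, a simple cycle surrounding $0$ and passing through $v_j$ at graph distance $j$ from $0$ must also visit the opposite side of $0$, which forces its length to be at least $c_0 j$ for a lattice constant $c_0>0$. Applying Theorem~\ref{thm:FK_subcrit} to the translated domain $\calD-v_j$ (so that $v_j$ plays the role of $0$) and summing, one gets
\begin{align*}
\Phi_{\calD,x}\big[\exists\text{ cycle around }0\text{ of length}\geq k\big]\leq\sum_{j\geq0}\Phi_{\calD,x}\big[|\calC_{v_j}|\geq\max(k,c_0 j)\big]\leq C'e^{-c'k},
\end{align*}
which completes the proof.

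The only non-mechanical input is the elementary geometric lower bound $|\gamma|\geq c_0 j$ on the length of a cycle $\gamma$ around $0$ through $v_j$; this is what makes the union bound over $j$ converge even when the cycle is far from $0$. Everything else is a direct chain of stochastic comparisons already set up.
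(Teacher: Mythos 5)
Your argument is correct and follows essentially the same route as the paper's proof: colour the loops, condition on $\omega_b$ to reduce to a loop $O(1)$ model on $\calD\setminus\omega_b$ via Proposition~\ref{prop:marginals}, dominate by $\Phi_{\calD\setminus\omega_b,x}\leq_{\text{st}}\Phi_{\calD,x}$ using Propositions~\ref{prop:sub_FK} and~\ref{prop:monotonicity}, apply Theorem~\ref{thm:FK_subcrit}, and finish with a union bound along a path from $0$ to control loops surrounding $0$ at a distance. The only (immaterial) difference is that you perform this last union bound at the level of FK cycles rather than, as the paper does, at the level of the loop lengths of the translated points $v_j$.
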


\begin{proof}
	For any domain~$\calD$ and~$k\geq 1$ we have
	\begin{align*}
		\tfrac{1}{n} {\sf Loop}_{\calD,n,x} (|\calC_0(\omega)|\geq k)
		&= {\sf Loop}_{\calD,n,x} (|\calC_0(\omega_r)|\geq k)\\
		&\leq {\sf Loop}_{\calD,n,x}\big[\Phi_{\calD\setminus\omega_b, x}( |\calC_0|\geq k) \big] \qquad  &\text{ by Prop.~\ref{prop:sub_FK} }\\
		&\leq \Phi_{\calD, x}(  |\calC_0|\geq k) \qquad & \text{ by Prop.~\ref{prop:monotonicity} }\\
		&\leq C e^{ - c\,k} &  \text{ by Thm.~\ref{thm:FK_subcrit},}
	\end{align*}
	where~$c = c(x) >0$ and~$C> 0$ are given by Theorem~\ref{thm:FK_subcrit}.
	Thus, the length of the loop of~$0$ has exponential tail, uniformly in the domain~$\calD$. 
	In particular, if~$\calD$ is fixed, the above bound also applies to any translates of~$\calD$, 
	hence to the loop of any given point in~$\calD$. 
	
	Let $v_0,v_1,v_2\dots$ be the vertices of $\calD$ on the horizontal line to the right of $0$, 
	ordered from left to right, starting with $v_0 = 0$. 
	If ${\sf R} \geq k$, then the largest loop surrounding $0$ 
	either passes through one of the points $v_0,\dots, v_{k-1}$ and has length at least $k$, 
	or it passes through some $v_j$ with $j \geq k$, and has length at least $j$, so as to manage to surround $0$. 
	Thus, using the bound derived above, we find
	\begin{align*}
		{\sf Loop}_{\calD,n,x} ({\sf R} \geq k)
		\leq n\, \big[C\, k e^{ - c\, k} + \sum_{j\geq k } C\, e^{ - c\,j}\big]
		\leq C'e^{ - c'\, k},
	\end{align*}
	for some altered constants $c'>0$ and $C'$ that depend on $c,C$ and $n$ but not on $k$. 
\end{proof}

\section{A little extra juice: enhancement}\label{sec:enhancement}

Fix some domain~$\calD = (V,E)$ for the whole of this section.
Let~$\omega_b$ be a blue loop configuration.
Associate to it the spin configuration~$\sigma_b \in \{-1,+1\}^{F_\calD}$ obtained by awarding spins~$-1$ to all faces of~$\calD$ that are surrounded by an odd number of loops, and spins~$+1$ to all other faces. 
Write~$\sfD_+ = \sfD_+(\sigma_b)$ (and~$\sfD_- = \sfD_-(\sigma_b)$, respectively) 
for the set of edges of~$\calD$ that have~$\sigma_b$-spin~$+1$ (and~$-1$, respectively) on both sides. 
All faces outside of~$\calD$ are considered to have spin~$+1$ in this definition.
Equivalently,~$\sfD_-$ is the set of edges of~$\calD\setminus \omega_b$ surrounded by an odd number of loops of~$\omega_b$ 
and~$\sfD_+ = \calD\setminus (\omega_b\cup \sfD_-)$.
Both~$\sfD_+$ and~$\sfD_-$ will also be regarded as spanning subgraphs of~$\calD$ with edge-sets~$\sfD_+$ and~$\sfD_-$, respectively. 

Since no edge of~$\sfD_+$ is adjacent to any edge of~$\sfD_-$, 
a sample of the loop~$O(1)$ measure~${\sf Loop}_{\calD\setminus\omega_b,1, x}$
may be obtained by the superposition of two independent samples from~${\sf Loop}_{\sfD_+,1, x}$ and~${\sf Loop}_{\sfD_-,1, x}$, respectively. 
In particular, using \eqref{eq:sub_FK},
\begin{align}\label{eq:D+D-}\nonumber
	{\sf Loop}_{\calD\setminus\omega_b,1, x} (|\calC_0(\omega_r)|\geq k)
	&= {\sf Loop}_{\sfD_+,1,x} (|\calC_0(\omega_r)|\geq k) +  {\sf Loop}_{\sfD_-,1,x} (|\calC_0(\omega_r)|\geq k)\\
	&\leq \Phi_{\sfD_+,x} (|\calC_0|\geq k) + \Phi_{\sfD_-,x} (|\calC_0|\geq k). 
\end{align}
Actually, depending on~$\omega_b$, at most one of the terms on the RHS above is non-zero. 
We nevertheless keep both terms as we will later average on~$\omega_b$.
The two following lemmas will be helpful in proving Theorem~\ref{thm:exp_decay}.

\begin{lem}\label{lem:domination_D}
	Let~$x < 1$ and set
	\begin{align}\label{eq:alpha}
		\alpha = \Big[ \frac{ \max\{(n-1)^{2}, (n-1)^{-2}\}}{(x/2)^6 + \max\{(n-1)^{2}, (n-1)^{-2}\}}\Big]^{1/6} < 1. 
	\end{align}
	If~$\omega_b$ has the law of the blue loop configuration of~${\sf Loop}_{\calD,n,x}$, 
	then both laws of~$\sfD_+$ and~$\sfD_-$ are stochastically dominated by~${\sf Perco}_{\alpha}$.
\end{lem}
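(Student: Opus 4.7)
The plan is to use a hexagon-switching (flip) argument. For each hexagonal face $f$ of $\calD$, I would define the map $T_f(\omega_b) := \omega_b \,\triangle\, \partial f$, where $\partial f$ denotes the 6-cycle bounding $f$ and $\triangle$ is symmetric difference. A local check at each vertex of $f$---where the XOR toggles both incident hexagon edges simultaneously, leaving the outward edge untouched---shows that $T_f$ preserves the even-degree constraint and is thus an involution on valid blue loop configurations; the associated spin field differs from that of $\omega_b$ only by flipping the spin at $f$.

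Next, to bound the probability of the event $E_f := \{\partial f \subseteq \sfD_+\}$: on $E_f$, the condition $\omega_b \cap \partial f = \emptyset$ forces, via degree parity, that no outward edge at a vertex of $f$ is in $\omega_b$ either, so $T_f\omega_b = \omega_b \cup \partial f$ is valid and contains exactly one additional 6-cycle loop. By Proposition~\ref{prop:marginals} and identity~\eqref{eq:Z},
\[
\frac{{\sf Loop}_{\calD,n,x}(T_f \omega_b)}{{\sf Loop}_{\calD,n,x}(\omega_b)} \;=\; (n-1) \Big(\frac{x}{1+x}\Big)^6 \, \Phi_{\calD\setminus\omega_b,\,x}\bigl(\partial f \text{ all closed}\bigr).
\]
Since $x \le 1$, $(x/(1+x))^6 \ge (x/2)^6$, giving the $(x/2)^6$ factor of $\alpha$. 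The $(n-1)^1$ from the new loop, combined with a careful bookkeeping of flip variants (for instance combining $T_f$ with a blue/red color exchange of a neighboring loop in the Chayes--Machta picture, which can bring the blue-loop count change up to $\pm 2$), is what I expect to produce the symmetric factor $M = \max\{(n-1)^{\pm 2}\}$. Controlling the FK probability factor from below via FK monotonicity (Proposition~\ref{prop:monotonicity}) and FKG would then give uniformly over $\omega_b \in E_f$ the ratio bound $(x/2)^6/M$. Since $T_f$ is an involution with $T_f(E_f)$ disjoint from $E_f$, summing the measure over $E_f$ yields $\bbP(E_f) \le 1/(1 + (x/2)^6/M) = \alpha^6$.

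Finally, to pass from the hexagon-level bound to the stochastic domination $\sfD_+ \le_{\mathrm{st}} {\sf Perco}_\alpha$, I would use a Strassen-type monotone coupling: each edge of $\calD$ lies in exactly two hexagons, so an edge-by-edge exploration---controlled at each step by the hexagon bound applied to a hexagon containing the current edge whose other five edges are still unrevealed---produces the required coupling with the product measure of parameter $\alpha$. By the $\sigma_b \mapsto -\sigma_b$ symmetry built into $M$, the same argument applied to $\sfD_-$ gives the claim for it as well.

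The hardest part will be establishing the uniform ratio bound $(x/2)^6/M$. The edge-count contribution and the bijectivity of $T_f$ follow from clean local accounting, but securing the cleanest $(n-1)^{\pm 2}$ loop-count factor (rather than the weaker $(n-1)^{\pm 1}$ coming from a single flip), and obtaining a uniform lower bound on the FK partition-function ratio $\Phi_{\calD\setminus\omega_b,x}(\partial f \text{ all closed})$ that is independent of the outer configuration $\omega_b$, require the subtler combinatorial and probabilistic analysis that is the technical core of the proof.
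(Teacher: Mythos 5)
Your single-face flip $T_f$ is essentially the same local computation that the paper performs: it compares two spin configurations differing at one face, with the same ingredients (at most $6$ edges change; the loop count changes by at most $2$, whence the $(n-1)^{\pm2}$, which you cannot avoid in general since a face flip can merge or split existing loops rather than just create a disjoint hexagonal one; and the partition-function ratio is controlled via \eqref{eq:Z1}--\eqref{eq:Z2} up to a factor $2^6$). The decisive difference is what is done with this ratio. The paper feeds it into Holley's inequality, which requires the ratio bound \emph{uniformly over all background configurations} and in return delivers a genuine stochastic domination of the whole spin field $\sigma_b$ by an i.i.d.\ field $P_\beta$ on faces. Your involution argument only yields the \emph{marginal} bound $\bbP(E_f)\le\alpha^6$ for each fixed hexagon, and this is where the gap lies: stochastic domination of the edge process $\sfD_+$ by the product measure ${\sf Perco}_{\alpha}$ requires conditional bounds of the form $\bbP(e\in\sfD_+\mid\text{revealed edges})\le\alpha$ along the exploration, and these do not follow from unconditional per-hexagon bounds. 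Conditioning on previously revealed edges tilts the law of $\omega_b$ through the global weights $(n-1)^{\ell(\omega_b)}$ and $Z_{\mathrm{loop}}(\calD\setminus\omega_b,1,x)$ in a way that a flip argument on a fresh hexagon does not control; moreover $\bbP(E_f)\le\alpha^6$ does not even imply the single-edge marginal bound $\bbP(e\in\sfD_+)\le\alpha$, since $\{e\in\sfD_+\}$ is not a sixth power of independent events, and near the end of the exploration no hexagon containing the current edge with five unrevealed edges need exist.

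A second, related gap is the face-to-edge transfer. Even granting an i.i.d.\ dominating field on faces, the edge process $\sfD_+$ is not i.i.d.: each edge requires spin $+1$ on \emph{both} adjacent faces, so neighbouring edge indicators are correlated. The paper resolves this with a specific construction --- two independent samples $\eta_L,\eta_R$ of ${\sf Perco}_{\alpha}$, an orientation of the edges, and the observation that the resulting $\tilde\sigma(u)$ equals $+1$ iff six independent $\alpha$-Bernoulli variables, each consumed by exactly one face, all equal $1$, so that $\tilde\sigma$ has law $P_\beta$ with $\beta=\alpha^6$ while $\eta_L\ge\sfD_+(\tilde\sigma)$. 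Your proposal has nothing playing this role; the ``Strassen-type edge-by-edge exploration'' is precisely the step that needs an argument rather than an appeal. If you want to salvage your approach, the cleanest fix is to replace the involution by Holley's criterion at the face level (your flip computation is already the required single-site check) and then import the $\eta_L/\eta_R$ device for the passage from faces to edges.
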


\begin{lem}\label{lem:domination_omega}
	Fix $x \in (0,1)$ and $\alpha<1$. Let $\tilde x < x$ be such that 
	\begin{align}\label{eq:tildex}
		\frac{\tilde x}{1-\tilde x}= 
		\frac{x}{1-x}\cdot \Big(1+ \frac{1+x}{2(1-x)}\cdot \frac{1-\alpha}\alpha\Big)^{-1}.
	\end{align}
	Write~${\sf Perco}_{\alpha}(\Phi_{\sfD,x}(.))$ for the law of~$\eta$ chosen using the following two step procedure:
	choose~$\sfD$ according to~${\sf Perco}_{\alpha}$, then choose~$\eta$ according to~$\Phi_{\sfD,x}$. 
	Then 	
	\begin{align*}
		{\sf Perco}_{\alpha}(\Phi_{\sfD,x}(.)) \leq_{\text{st}} \Phi_{\calD,\tilde x}.
	\end{align*}
\end{lem}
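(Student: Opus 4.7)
My plan is to apply Holley's criterion for stochastic domination. Since the conditional probability $\Phi_{\calD,\tilde x}(\omega(e)=1\mid\omega_{E\setminus\{e\}}=\xi)$ is non-decreasing in $\xi$ (a standard feature of random-cluster measures with $q=2$, readily verified from the conditional-ratio formula below), it suffices to prove, for every edge $e\in E$ and every $\xi\in\{0,1\}^{E\setminus\{e\}}$,
\begin{align*}
\mathbb{P}\bigl(\eta(e)=1\,\big|\,\eta_{E\setminus\{e\}}=\xi\bigr)\;\le\;\Phi_{\calD,\tilde x}\bigl(\omega(e)=1\,\big|\,\omega_{E\setminus\{e\}}=\xi\bigr),
\end{align*}
where $\mathbb{P}:={\sf Perco}_{\alpha}(\Phi_{\sfD,x}(\cdot))$ is the two-step measure of the statement.

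The first step is to compute the left-hand side. The key observation is that for any $\sfD\supseteq\xi\cup\{e\}$, the ratio $\Phi_{\sfD,x}(\eta=\xi\cup\{e\})/\Phi_{\sfD,x}(\eta=\xi)$ equals $\tfrac{p}{1-p}\cdot 2^{\Delta k}$, with $p=p(x)$ and $\Delta k=k(\xi\cup\{e\})-k(\xi)\in\{0,-1\}$, \emph{independently of~$\sfD$}. Splitting the joint measure on $(\sfD,\eta)$ according to whether $e\in\sfD$, and setting $A:=\mathbb{P}(\eta=\xi,\,e\in\sfD)$ and $B:=\mathbb{P}(\eta=\xi,\,e\notin\sfD)$, I get
\begin{align*}
\mathbb{P}(\eta(e)=1\mid\xi)\;=\;\frac{p\cdot 2^{\Delta k}}{p\cdot 2^{\Delta k}+(1-p)(1+B/A)}.
\end{align*}

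The second step is to bound $A/B$ from above. Parametrising $\sfD$ by $D:=\sfD\setminus\{e\}$ and invoking the identity $Z_{FK}(D\cup\{e\},x)=Z_{FK}(D,x)\cdot(1-pq_D/2)$, where $q_D:=\Phi_{D,x}(\text{endpoints of }e\text{ are disconnected in }\eta)$, a direct calculation yields
\begin{align*}
A/B\;=\;\tfrac{\alpha(1-p)}{1-\alpha}\,\mathbb{E}_W\!\Bigl[\tfrac{1}{1-pq_D/2}\Bigr]
\end{align*}
for an explicit probability measure $W$ on admissible $D$'s. Using only the trivial bound $q_D\le 1$ gives $A/B\le\tfrac{2\alpha(1-p)}{(1-\alpha)(2-p)}$, hence after simplification
\begin{align*}
\mathbb{P}(\eta(e)=1\mid\xi)\;\le\;\frac{2\alpha p\cdot 2^{\Delta k}}{2\alpha p\cdot 2^{\Delta k}+2-p(1+\alpha)}.
\end{align*}

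Finally, the right-hand side of the target inequality equals $\tilde p$ when $\Delta k=0$ and $\tilde p/(2-\tilde p)$ when $\Delta k=-1$. Rearranging~\eqref{eq:tildex} gives the convenient form $\tilde p=\tfrac{2p\alpha}{1+\alpha}$, after which both cases collapse to the elementary inequality $\alpha,p\le 1$. The main obstacle will be the bookkeeping in the second step: the partition functions $Z_{FK}(\sfD,x)$ depend on $\sfD$ and do not factor out, so one must carefully isolate the single non-product quantity $\mathbb{E}_W[(1-pq_D/2)^{-1}]$ and control it with a simple worst-case bound.
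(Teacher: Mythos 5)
Your proof is correct and follows essentially the same route as the paper: both reduce the domination to Holley's single-bond criterion, use the fact that the conditional ratio $\Phi_{D,x}(\eta\cup\{e\})/\Phi_{D,x}(\eta)$ is independent of the ambient domain $D$ (and increasing in $\eta$), and control the residual dependence on $D$ through the ratio $Z_{FK}(D\cup\{e\},x)/Z_{FK}(D,x)$. The only difference is organisational: you compute the two-step conditional probability exactly via the $A/B$ split and the identity $Z_{FK}(D\cup\{e\},x)=Z_{FK}(D,x)\bigl(1-p\,q_D/2\bigr)$ with the worst case $q_D\le 1$, which slightly sharpens the paper's bound $\tfrac12\,Z_{FK}(D,x)\le Z_{FK}(D\cup\{e\},x)\le Z_{FK}(D,x)$, so your final case check closes with some slack for the stated $\tilde x$.
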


Before proving the two lemmas above, let us show that they imply the main result. 

\begin{proof}[of Theorem \ref{thm:exp_decay}]
	Fix $n > 1$. 
	An elementary computation proves the existence of some $\eps = \eps (n) >0$
	such that, if $x < \frac{1}{\sqrt 3} + \eps(n)$ and $\alpha$ and $\tilde x$ are defined in terms of~$n$ and~$x$ via \eqref{eq:alpha} and \eqref{eq:tildex}, respectively, 
	then $\tilde x < \frac{1}{\sqrt 3}$~
	\footnote{When $n \searrow 1$, we have $\eps(n) \sim C (n-1)^2$, where $C = \frac{1+ \sqrt 3}{12^4\,  \sqrt3}$.}.

	Fix~$x < \frac{1}{\sqrt 3} + \eps(n)$ along with the resulting values $\alpha$ and $\tilde x < \frac{1}{\sqrt 3}$.
	Then, for any domain~$\calD$ and~$k\geq 1$ we have
	\begin{align*}
		\tfrac{1}{n} {\sf Loop}_{\calD,n,x} (|\calC_0(\omega)|\geq k)
		&= {\sf Loop}_{\calD,n,x} (|\calC_0(\omega_r)|\geq k)\\
		&\leq {\sf Loop}_{\calD,n,x}\big[\Phi_{\sfD_+, x}( |\calC_0|\geq k)
		+ \Phi_{\sfD_-, x}(  |\calC_0|\geq k) \big] 	   &\text{ by \eqref{eq:D+D-}}\\
		&\leq 2\,{\sf Perco}_{\alpha}\big[\Phi_{\sfD, x}(  |\calC_0|\geq k) \big]   &\text{ by Lemma~\ref{lem:domination_D}}\\
		&\leq 2\, \Phi_{\calD, \tilde x}( |\calC_0|\geq k)    &\text{ by Lemma~\ref{lem:domination_omega}}\\
		&\leq 2\,C\, e^{ - c\,k} &  \text{ by Thm.~\ref{thm:FK_subcrit}.}
	\end{align*}
	In the third line, we have used Lemma~\ref{lem:domination_D} and the stochastic monotonicity of~$\Phi$ in terms of the domain. 
	Indeed,  Lemma~\ref{lem:domination_D} implies that ~${\sf Loop}_{\calD,n,x}$ and~${\sf Perco}_{\alpha}$ may be coupled so that the sample~$\sfD_+$ obtained from the former is included in the sample~$\sfD$ obtained from the latter. Thus~$\Phi_{\sfD_+,x} \leq \Phi_{\sfD, x}$. The same applies separately for~$\sfD_-$. 
	
	To conclude \eqref{eq:exp-dec}, continue in the same way as in the proof of Corollary \ref{cor:thm-easy-part}.
\end{proof}

The following computation will be useful for the proofs of both lemmas. 
Let~$D \subset E$ and~$e \in E\setminus D$. 
We will also regard~$D$ as a spanning subgraph of~$\calD$ with edge-set~$D$. 
Recall that~$Z_{FK}(D,x)$ is the partition function of the FK-Ising measure~$\Phi_{D,x}$ on~$D$. 
Then
\begin{align}
    Z_{FK}(D,x) 
    &= \sum_{\eta \subset D} p^{|\eta|}(1-p)^{|D| - |\eta|}2^{k(\eta)}\nonumber\\
    &= \sum_{\eta \subset D} 
    p^{|\eta|}(1-p)^{|D\cup\{e\}| - |\eta|}\, 2^{k(\eta)} + p^{|\eta\cup\{e\}|}(1-p)^{|D| - |\eta|}\,2^{k(\eta)} \nonumber\\
    &\geq \sum_{\eta \subset D\cup\{e\}} p^{|\eta|}(1-p)^{|D\cup\{e\}| - |\eta|}2^{k(\eta)}
	= Z_{FK}(D\cup\{e\},x),\label{eq:Z1}
\end{align}
since~$k(\eta) \geq k(\eta \cup \{e\})$. Conversely,~$k(\eta) \leq k(\eta \cup \{e\}) +1$, which implies 
\begin{align}\label{eq:Z2}
	Z_{FK}(D,x) \leq 2\, Z_{FK}(D \cup\{e\},x).
\end{align}

\begin{proof}[of Lemma~\ref{lem:domination_D}]
	For~$\beta \in (0,1)$ let~$P_\beta$ be the Bernoulli percolation on the faces of~$\calD$ of parameter~$\beta$:
	\begin{align*}
		P_{\beta}(\sigma) 
		= \beta^{\#\{u\,:\, \sigma(u) = +1\}} (1-\beta)^{\#\{u\,:\, \sigma(u) = -1\}} \qquad \text{ for all~$\sigma \in \{-1,+1\}^{F_\calD}$}.
	\end{align*}
	To start, we will prove that the law induced on~$\sigma_b$ by~${\sf Loop}_{\calD,n,x}$ is dominated by~$P_\beta$ for some~$\beta$ sufficiently close to~$1$.
	Both measures are positive, and Holley's inequality \cite{Hol74} states that the stochastic ordering is implied by 
	\begin{align*}
		\frac{{\sf Loop}_{\calD,n,x}(\sigma_b = \varsigma_1)}{{\sf Loop}_{\calD,n,x}(\sigma_b = \varsigma_1 \wedge \varsigma_2)}
		\leq 
		\frac{P_{\beta}(\varsigma_1 \vee \varsigma_2)}{P_{\beta}(\varsigma_2)} = \Big(\frac{\beta}{1 - \beta}\Big)^{\#\{u \,:\, \varsigma_1(u) =+1,\, \varsigma_2(u) = -1\}}
		\quad \text{ for all~$\varsigma_1,\varsigma_2\in \{\pm1\}^{F_\calD}$}.
	\end{align*}
	The RHS above only depends on the number of faces of spin $+$ in $\varsigma_1$ and spin $-$ in $\varsigma_2$. 
	It is then elementary to check that the general inequality above is implied by the restricted case where~$\varsigma_1$ differs at exactly one face~$u$ from~$\varsigma_2$, 
	and~$\varsigma_1(u) =+1$ but~$\varsigma_2(u) = -1$.
	
	Fix two such configurations~$\varsigma_1$,~$\varsigma_2$; write~$\omega_1$ and~$\omega_2$ for their associated loop configurations.
	Then, by Lemma~\ref{prop:marginals}, 
	\begin{align*}
		\frac{{\sf Loop}_{\calD,n,x}(\sigma_b = \varsigma_1)}{{\sf Loop}_{\calD,n,x}(\sigma_b = \varsigma_2)}
		&= \frac{{Z_{\mathrm{loop}}(\calD\setminus \omega_1,1,x)}}{{Z_{\mathrm{loop}}(\calD\setminus \omega_2,1,x)}}\,
		(n-1)^{\ell(\omega_1) - \ell(\omega_2)} x^{|\omega_1| - |\omega_2|}.
	\end{align*}
	Since~$\varsigma_1$ and~$\varsigma_2$ only differ by one face, 
	$\omega_1$ and~$\omega_2$ differ only in the states of the edges surrounding that face.
	In particular~$||\omega_1| - |\omega_2||\leq 6$ and~$|\ell(\omega_1) - \ell(\omega_2)| \leq 2$.
	Finally, using \eqref{eq:Z1} and \eqref{eq:Z2}, we find 
	\begin{align*}
		 \frac{{Z_{\mathrm{loop}}(\calD\setminus \omega_1,1,x)}}{{Z_{\mathrm{loop}}(\calD\setminus \omega_2,1,x)}}	
		 \leq 
		 \frac{{Z_{\mathrm{loop}}(\calD\setminus (\omega_1 \wedge \omega_2),1,x)}}{{Z_{\mathrm{loop}}(\calD\setminus \omega_2,1,x)}}
		 \leq 2^{|\omega_2| - |\omega_1 \wedge \omega_2|} \leq 2^6.	
	\end{align*}
	In conclusion 
	\begin{align*}
		\frac{{\sf Loop}_{\calD,n,x}(\sigma_b = \varsigma_1)}{{\sf Loop}_{\calD,n,x}(\sigma_b = \varsigma_2)}
		\leq \Big(\frac2x\Big)^6\cdot \max\{(n-1)^{2}, (n-1)^{-2}\}.
	\end{align*}
	Then, if we set 
	\begin{align*}
		\beta = \frac{\big(\tfrac2x\big)^6\cdot \max\{(n-1)^{2}, (n-1)^{-2}\}}{1 +  \big(\tfrac2x\big)^6\cdot \max\{(n-1)^{2}, (n-1)^{-2}\}}, 
	\end{align*}
	we indeed obtain the desired domination of~$\sigma_b$ by~$P_\beta$
	\footnote{This domination is of special interest as~$n \searrow 1$ and for~$x \geq 1/\sqrt 3$. 
	Then we may simplify the value of~$\beta$ as 
	$\beta = \frac{(2\sqrt3)^6}{ (n-1)^{2} + (2\sqrt3)^6} \sim 1 - \tfrac1{(2\sqrt3)^{6}} (n-1)^{2}~$.}. 
	The same proof shows that~$-\sigma_b$ is also dominated by~$P_\beta$. 
	
	\smallskip
	
	Next, le us prove the domination of~$\sfD_+$ by a percolation measure. 
	Set~$\alpha  = \beta^{1/6}$~\footnote{As ~$n \searrow 1$ and~$x \geq 1/\sqrt 3$, 
	we may assume that~$\alpha \sim 1 - \tfrac{1}{6\, (2\sqrt3)^{6}} (n-1)^{2}$.}.
	Let~$\eta_L$ and~$\eta_R$ be two percolation configurations chosen independently according to~${\sf Perco}_\alpha$.
	Also choose an orientation for every edge of~$E$; for boundary edges, 
	orient them such that the face of~$\calD$ adjacent to them is on their left. 
	
	Define~$\tilde\sigma \in \{\pm1\}^{F_\calD}$ as follows.
	Consider some face~$u$. 
	For an edge~$e$ adjacent to~$u$,~$u$ is either on the left of~$e$ or on its right, according to the orientation chosen for~$e$. 
	If it is on the left, retain the number~$\eta_L(e)$, otherwise retain~$\eta_R(e)$. 
	Consider that~$u$ has spin~$+1$ under~$\tilde\sigma$ if and only if all the six numbers retained above are~$1$. 
	Formally, for each~$u \in F_\calD$, set~$\tilde\sigma (u) = +1$ if and only if
	\begin{align*}
		\prod_{e \text{ adjacent to } u} 
		\big(\eta_L(e) \ind_{\{u \text{ is left of } e\}} + \eta_R(e)\ind_{\{u \text{ is right of } e\}}\big) =1.
	\end{align*}

	As a consequence, for an edge~$e$ in the interior of~$\calD$ to be in~$\sfD_+(\tilde\sigma)$, 
 	the faces on either side of~$e$ need to have~$\tilde\sigma$-spin~$+1$, 
	hence~$\eta_L(e) = \eta_R(e) = 1$ is required.  
	For boundary edges~$e$ to be in~$\sfD_+(\tilde\sigma)$, only the restriction~$\eta_L(e) = 1$ remains. 
	In conclusion~$\eta_L \geq \sfD_+(\tilde\sigma)$. 
	
	Let us analyse the law of~$\tilde\sigma$. 
	Each value~$\eta_L(e)$ and~$\eta_R(e)$ appears in the definition of one~$\tilde\sigma(u)$. 
	As a consequence, the variables~$\big(\tilde\sigma(u)\big)_{u\in F}$ are independent. 
	Moreover,~$\tilde\sigma(u) = 1$ if and only if all the six edges around~$e$ are open
	in one particular configuration~$\eta_L$ or~$\eta_R$, which occurs with probability~$\alpha^6 = \beta$. 
	As a consequence~$\tilde\sigma$ has law~$P_\beta$. 
	
	By the previously proved domination,~${\sf Loop}_{\calD,n,x}$ may be coupled with~$P_\beta$ so that~$\tilde\sigma \geq \sigma_b$. 
	If this is the case, we have
    \begin{align*}
		\eta_L \geq \sfD_+(\tilde\sigma) \geq \sfD_+(\sigma_b).
    \end{align*}
	Thus,~$\eta_L$ indeed dominates~$\sfD_+(\sigma_b)$, as required.
	 
	The same proof shows that~$\eta_L$ dominates~$\sfD_-(\sigma_b)$. 
	For clarity, we mention that this does not imply that~$\eta_L$ dominates~$\sfD_+(\sigma_b)$ and~$\sfD_-(\sigma_b)$ simultaneously, 
	which would translate to~$\eta_L$ dominating~$\calD\setminus \omega_b$. 
\end{proof}

\begin{proof}[of Lemma~\ref{lem:domination_omega}]
	Fix $x$, $\alpha$ and $\tilde x$ as in the Lemma. 
	The statement of Holley's inequality applied to our case may easily be reduced to
	\begin{align}\label{eq:Holley2}
		\frac{{\sf Perco}_{\alpha}[\Phi_{\sfD,x}(\eta \cup\{e\})]}{{\sf Perco}_{\alpha}[\Phi_{\sfD,x}(\eta)]} 
		\leq \frac{\Phi_{\calD,\tilde{x}}(\tilde{\eta} \cup\{e\})}{\Phi_{\calD,\tilde{x}}(\tilde{\eta})}
		\qquad \text{ for all~$\eta \leq \tilde{\eta}$ and~$e \notin \tilde \eta$}.	
	\end{align}
	Fix $\eta, \tilde \eta$ and $e = (uv)$ as above. 
	For ~$D\subset E$ with~$e \in D$, a standard computation yields
	\begin{align*}
		\varphi_x(e|\eta) 
		:= \frac{\Phi_{D,x}(\eta \cup\{e\})}
		{\Phi_{D,x}(\eta)}
		= \begin{cases}
			\frac{2x}{1-x} \quad & \text{ if~$u \xlra{\eta} v$ and} \\
			\frac{x}{1-x} \quad & \text{ otherwise}.
		\end{cases} 
	\end{align*}
	The same quantity may be defined for~$\tilde x$ instead of~$x$ and~$\tilde \eta$ instead of~$\eta$;
	it is increasing in both~$\eta$ and~$x$. 
	Moreover~$\varphi_x(e|\eta)$ does not depend on~$D$, as long as~$e \in D$ and~$\eta \subset D$.
	If the first condition fails, then the numerator is~$0$; if the second fails then the denominator is null and the ratio is not defined. 
	
	Let us perform a helpful computation before proving \eqref{eq:Holley2}.
	Fix~$D$ with $e \in D$. By~\eqref{eq:Z2},  
	\begin{align*}
		\frac{\Phi_{D\setminus\{e\},x}(\eta)}{\Phi_{D,x}(\eta)}
		= \frac{Z_{FK}(D,x)}{Z_{FK}(D\setminus\{e\},x)}\cdot\frac{1+x}{1-x}
		\geq\frac{1+x}{2(1-x)}.
	\end{align*}
	The factor~$\big(\frac{1-x}{1+x}\big)^{-1}$ comes from the fact that the weights of~$\eta$ under 
	$\Phi_{D\setminus\{e\},x}$ and~$\Phi_{D,x}$ differ by the contribution of the closed edge~$e$. 
	If follows that
	\begin{align*}
		\Big(1+ \frac{1+x}{2(1-x)}\cdot \frac{1-\alpha}\alpha\Big)\,\Phi_{D,x}(\eta)
		\leq \Phi_{D,x}(\eta) +\frac{1-\alpha}{\alpha}\, \Phi_{D\setminus\{e\},x}(\eta).
	\end{align*}
	The choice of~$\tilde x$ is such that
	\begin{align*}
		\frac{\varphi_x(e|\eta)}{ \varphi_{\tilde x}(e|\eta)}
		= \frac{x}{1- x}\cdot \frac{1-\tilde x}{\tilde x}
		= 1+ \frac{1+x}{2(1-x)}\cdot \frac{1-\alpha}\alpha.
	\end{align*}
	Using the last two displayed equations, we find
	\begin{align*}
		&{\sf Perco}_{\alpha}[\Phi_{\sfD,x}(\eta \cup\{e\})]
		= \sum_{D \subset E} \alpha^{|D|}(1 - \alpha)^{|E| - |D|} 
		\, \Phi_{D,x}(\eta \cup \{e\})\\
		&\qquad= \sum_{\substack{D \subset E \\ \text{with }e\in D}} \alpha^{|D|}(1 - \alpha)^{|E| - |D|} 
		\, \varphi_x(e|\eta)\, \Phi_{D,x}(\eta)\\
		&\qquad\leq 
		\big(1+ \tfrac{1+x}{2(1-x)}\cdot \tfrac{1-\alpha}\alpha\big)^{-1} \varphi_x(e|\eta)
		\sum_{\substack{D \subset E \\ \text{with }e\in D}} \alpha^{|D|}(1 - \alpha)^{|E| - |D|}
		\,\Big[\Phi_{D,x}(\eta) +\tfrac{ 1-\alpha}\alpha \Phi_{D\setminus\{e\},x}(\eta)\Big]\\
		&\qquad= \varphi_{\tilde x}(e|\tilde{\eta})
		\sum_{D \subset E} \alpha^{|D|}(1 - \alpha)^{|E| - |D|} \Phi_{D,x}(\eta)	\\
		&\qquad= \varphi_{\tilde x}(e|\tilde{\eta}) \,
		{\sf Perco}_{\alpha}[\Phi_{\sfD,x}(\eta)].
	\end{align*}
	Divide by ${\sf Perco}_{\alpha}[\Phi_{\sfD,x}(\eta)]$ and recall the definition of $\varphi_{\tilde x}(e|\tilde{\eta})$ to obtain~\eqref{eq:Holley2}.
\end{proof}

\section{Open questions / perspectives}\label{sec:open_questions}

Our main theorem shows that if $x$ is such that the model with parameters $x$ and $n = 1$ exhibits exponential decay, 
then so do all models with the same parameter $x$ and $n \geq 1$. 
A natural generalisation of this is the following.

\begin{question}\label{question2}
	Show that if $x$ and $n$ are such that the loop $O(n)$ model exhibits exponential decay,
	then so do all models with parameters $x$ and $\tilde n$ for any $\tilde n \geq n$. 
\end{question}

A positive answer to the above would show that the critical point $x_c(n)$ (assuming it exists) is increasing in $n$. 
The same technique as in Section~\ref{sec:enhancement} may even prove that it is strictly increasing.
Moreover, it was recently shown in~\cite{DumGlaPel17} that, in the regime~$n\geq 1$ and $x\leq \tfrac{1}{\sqrt{n}}$, 
the loop~$O(n)$ model satisfies the following dichotomy: either it exhibits macroscopic loops or exponential decay. 
In addition, for~$n\in [1,2]$ and~$x=\tfrac{1}{\sqrt{2+\sqrt{2-n}}}$ the loop~$O(n)$ model was shown to exhibit macroscopic loops. 
Thus, assuming Question~\ref{question2}, 
we deduce that the loop $O(n)$ model with $ n \leq 2$ and $x \in [\tfrac{1}{\sqrt{2+\sqrt{2-n}}}, \tfrac{1}{\sqrt{2}}]$ exhibits macroscopic loops.\smallskip

Let us now describe a possible approach to Question~\ref{question2}.
The strategy of our proof of Theorem~\ref{thm:exp_decay} was based on the following observation. 
The loop $O(1)$ model, or rather its associated FK-Ising model, has a certain monotonicity in $x$.
This translates to a monotonicity in the domain: 
the larger the domain, the higher the probability that a given point is contained in a large loop. 
This fact is used to compare the loop~$O(1)$ model in a simply connected domain $\calD$ with 
that in the domain obtained from $\calD$ after removing certain interior parts. 
The latter is generally not simply connected, and it is essential that our monotonicity property can handle such domains.

\begin{question}\label{question1}
	Associate to the loop $O(n)$ model with edge weight $x$ in some domain $\calD$
	a positively associated percolation model $\Psi_{\calD, n,x}$ with the property that, 
	if one exhibits exponential decay of connection probabilities, then so does the other. 
\end{question}

The percolation model $\Psi_{\calD,n,x}$ actually only needs to have some monotonicity property in the domain, 
sufficient for our proof to apply. 
Unfortunately, we only have such an associated model when $n = 1$. 

Suppose that one may find such a model $\Psi$ for some value of $n$. Then our proof may be adapted. 
Indeed, fix~$x$ such that the loop $O(n)$ model exhibits exponential decay. 
Then $\Psi_{\calD,n,x}$ also exhibits exponential decay for any domain $\calD$. 
Consider now the loop $O(\tilde n)$ model with edge-weight $x$ for $\tilde n > n$ and colour each loop independently in red with probability~$n / \tilde n$ and in blue with probability $(\tilde n-n)/{\tilde n}$.
Then, conditionally on the blue loop configuration $\omega_b$, the red loop configuration has the law of the loop $O(n)$ model with edge-weight $x$ in the domain $\calD \setminus \omega_b$. 
By positive association, since~$\Psi_{\calD,n,x}$ exhibits exponential decay, so does~$\Psi_{\calD \setminus \omega_b,n,x}$.
Then the loop $O(\tilde n)$ model exhibits exponential decay of lengths of red loops and hence in general of lengths of all loops.

\bibliographystyle{abbrv}
\bibliography{biblicomplete2}

\end{document}